\documentclass[11pt,reqno]{amsart}
\usepackage{hyperref}
\usepackage{graphicx}
\usepackage{color}
\usepackage[all]{xy}
\usepackage{amsfonts,amssymb}
\usepackage{amsthm}
\usepackage{bbm} 

\usepackage{mathrsfs}

\usepackage{pdfsync}

\usepackage{a4wide}





\newtheorem{neu}{}[section]
\newtheorem{Cor}[neu]{Corollary}
\newtheorem*{Cor*}{Corollary}
\newtheorem{Thm}[neu]{Theorem}
\newtheorem*{Thm*}{Theorem}

\newtheorem{Prop}[neu]{Proposition}
\newtheorem*{Prop*}{Proposition}
\theoremstyle{definition}
\newtheorem{Lemma}[neu]{Lemma}

\newtheorem*{Rmk*}{Remark}
\newtheorem{Rmk}[neu]{Remark}
\newtheorem{Ex}[neu]{Example}
\newtheorem*{Ex*}{Example}

\newtheorem*{Qu*}{Question}

\newtheorem{Def}[neu]{Definition}

\newcommand{\N}{\mathbb{N}}

\newcommand{\Z}{\mathbb{Z}}
\newcommand{\R}{\mathbb{R}}
\newcommand{\HH}{\mathbb{H}}
\newcommand{\C}{\mathbb{C}}


\newcommand{\om}{\omega}
\newcommand{\Om}{\Omega}


\renewcommand{\S}{\mathbb{S}}
\renewcommand{\P}{\mathcal{P}}

\renewcommand{\L}{\mathcal{L}}

\newcommand{\beq}{\begin{equation}}
\newcommand{\beqn}{\begin{equation}\nonumber}
\newcommand{\eeq}{\end{equation}}

\newcommand{\bea}{\begin{equation}\begin{aligned}}
\newcommand{\bean}{\begin{equation}\begin{aligned}\nonumber}
\newcommand{\eea}{\end{aligned}\end{equation}}

\numberwithin{equation}{section}

\definecolor{Urs}{rgb}{0,.7,0}
\definecolor{Youngjin}{rgb}{0,0,1}
\definecolor{red}{rgb}{1,0,0}

\newcommand{\p}{\partial}


\newcommand{\T}{{\mathbb{T}}}		
\newcommand{\one}{{1\hskip-2.5pt{\rm l}}}

\begin{document}
\title[The growth sequence of symplectomorphisms]
{The growth sequence of symplectomorphisms\\
on symplectically hyperbolic manifolds}
\author{Youngjin Bae}
\address{
    Youngjin Bae\\
    Center for Geometry and Physics, Institute for Basic Science
	and Pohang University of Science and Technology (POSTECH)\\
	77 Cheongam-ro, Nam-gu, Pohang-si, Gyeongsangbuk-do, Korea 790-784}

\email{yjbae@ibs.re.kr}
\keywords{symplectomorphism, 
symplectically hyperbolic manifold, 
growth sequence,
cofilling function, flux, 
stable Hamiltonian structure, 
virtually contact structure}
\begin{abstract}
We study the growth rate of a sequence 
which measures the uniform norm of 
the differential under the iterates of maps.
On symplectically hyperbolic manifolds,
we show that this sequence has at least linear growth
for every non-identical symplectomorphisms
which are symplectically isotopic to the identity. 
\end{abstract}
\maketitle

\section{Introduction}
Let $(M,g)$ be a closed connected Riemannian manifold.
Given a diffeomorphism $\varphi$ on $M$, 
its {\em growth sequence} is defined by
\bean
\Gamma_n(\varphi):=
\max\left(\max_{m\in M}|d\varphi^n(m)|_g,
\max_{m\in M}|d\varphi^{-n}(m)|_g\right),\ n\in\N.
\eea
Here the norm of the differential is given by
the operator norm.
The explicit value of the growth sequence is hard to compute.
With the following terminology, however, 
we measure the growth rate of $\Gamma_n(\varphi)$.
Given two positive sequences 
$a_n,b_n:\N\to[0,\infty)$, 
write $a_n\lesssim b_n$
if there exists $c>0$ such that $a_n\leq c(b_n+1)$
for all $n\in\N$.

We investigate $\Gamma_n(\varphi)$
for symplectomorphisms $\varphi$
in the identity component of the group of
symplectomorphisms $\mathit{Symp}_0(M,\om)$
of a symplectic manifold $(M,\om)$.
Polterovich proved in his beautiful paper \cite{Pol} 
many interesting results 
about the growth type of $\Gamma_n(\varphi)$
for $\varphi\in\mathit{Symp}_0(M,\om)\setminus\{\one\}$
on a closed symplectic manifold $(M,\om)$
with $\pi_2(M)=0$.
Among them, the following result is related to this article:
A closed symplectic manifold $(M,\om)$
with $\pi_2(M)=0$ 
is called {\em symplectically hyperbolic},
if the symplectic form $\om$ admits a bounded primitive 
on the universal cover
$p:\widetilde M\to M$.
For symplectically hyperbolic manifolds $(M,\om)$,
Polterovich showed that $\Gamma_n(\varphi)\gtrsim n$ when 
$\varphi\in{\mathit{Symp}}_0(M,\om)\setminus\{\one\}$
has a fixed point of contractible type.

The existence of fixed points 
with positive action difference is crucial in his proof.
When $\varphi$ is a Hamiltonian diffeomorphism,
the existence of a fixed point is obtained from 
Floer's proof \cite{Fl} of Arnold's conjecture 
and another fixed point with different action
is established in the work of Schwarz \cite{Sch}
by using Floer homology.
For a non-Hamiltonian symplectomorphism,
the flux does not vanish, see Definition \ref{def:flux}.
This guarantees a second fixed point
with different action.

In the 2-dimensional case, i.e. a closed oriented surface $\Sigma_g$ 
of genus $g\geq 2$, see Example \ref{ex:g2},
we know that there exists a fixed point of 
$\varphi\in\mathit{Symp}_0(\Sigma_g,\om)$. 
In \cite{Gro1}, Gromov proved that 
$(-1)^n\chi(M)>0$ for a symplectically hyperbolic manifold 
$(M,\om)$ which is K\"ahler. 
Using this non-vanishing of the Euler characteristic,
one can also obtain a fixed point of contractible type of 
$\varphi\in\mathit{Symp}_0(M,\om)$ 
as in \cite[Example 1.3.C]{Pol}.
In the above cases, we hence conclude 
$\Gamma_n(\varphi)\gtrsim n$
for $\varphi\in\mathit{Symp}_0(M,\om)\setminus\{\one\}$,
but to the best of my knowledge there is no known result 
about the existence of fixed points
of $\varphi$ 
on symplectically hyperbolic manifolds in general.
We will prove the following statement that does not depend on 
the existence of a fixed point.\\[-2mm]

\noindent {\bf Main Theorem.}
{\it Let $(M,\om)$ be a symplectically hyperbolic manifold.
If $\varphi\in\mathit{Symp}_0(M,\om)\setminus\{\one\}$,
then $\Gamma_n(\varphi)\gtrsim n$.}\\[-2mm]

A direct consequence of Main Theorem, followed by \cite[Corollary 1.1.D]{Pol},
gives us obstructions to representations of 
certain discrete groups such as $SL(n,\Z)$ with $n\geq3$
into $\mathit{Symp}_0(M,\om)$
of a symplectically hyperbolic manifold $(M,\om)$.
A more precise statement is the following:
Let $G$ be an irreducible non-uniform lattice in a semi-simple real Lie group of real rank at least two. 
Assume that the Lie group is connected, without compact factors and with finite center. 
Then every homomorphism $G\to\mathit{Symp}_0(M,\om)$ has finite image 
when $(M,\om)$ is a symplectically hyperbolic manifold.
See \cite[Section 1.6]{Pol} for details.

Our main tools are the mapping torus construction
and the cofilling function of the twisted Hamiltonian
structure which will be introduced in the following section.
We interpret the non-trivial flux 
of non-Hamiltonian diffeomorphism
as a certain type of cofilling function.\\



\emph{Acknowledgement} :   
I sincerely thank Urs Frauenfelder for valuable discussions and the word of encouragement.
I appreciate Peter Albers, Felix Schlenk and Jungsoo Kang for their helpful comments.
I am also thankful to anonymous referee for detailed corrections.
This paper is written during a stay
at the University of M\"{u}nster 
and the Institute for Basic Science in POSTECH.
I thank them both for their stimulating working atmosphere
and generous support.

This material is supported by the 
SFB 878 –- Groups, Geometry and Actions,
and by the Research Center Program of IBS in Korea(CA1205).

\section{Preliminaries}

\subsection{Cofilling function}

\begin{Def}[\cite{Gro2}, \cite{Pol}]
Let $\sigma\in\Om^2(M)$ be a closed 2-form such that 
$\widetilde\sigma\in\Om^2(\widetilde M)$ is exact.
Let $\P_\sigma$ be the space of all 1-forms on $\widetilde M$
whose differential is $\widetilde\sigma$.
Pick a point $x\in\widetilde M$ and
denote by $B_{x}(s)$ the ball of radius $s>0$ with respect to $\widetilde g$
which is centered at $x$.
Then the {\em cofilling function} 
$u_\sigma:[0,\infty)\to[0,\infty)$ is defined by
\bean
u_\sigma(s)=u_{\sigma,g,x}(s):=
\inf_{\theta\in\P_\sigma}\sup_{z\in B_{x}(s)}|\theta_z|_{\widetilde g}.
\eea
Here the norm of a differential form is
\bean
|\theta_z|_{\widetilde g}:=\max\{\theta_z(v):
\|v\|_{\widetilde g}=1\}.
\eea
\end{Def}

Given two functions $f,g:[0,\infty)\to[0,\infty)$, we write $f\lesssim g$ 
if there exists $c>0$ such that $f(s)\leq c(g(s)+1)$ for all $s\in[0,\infty)$,
and $f\sim g$ if $f\lesssim g$, $g\lesssim f$.
If we choose another Riemannian metric $g'$ on $M$ and a different base point $x'$,
then $u_{\sigma,g,x}\sim u_{\sigma,g',x'}$.
This means that the growth type of the cofilling function is an invariant of
the closed 2-form $\sigma$.

\begin{Prop}\label{prop:coh}
The growth type of the cofilling function only 
depends on the cohomology class.
\end{Prop}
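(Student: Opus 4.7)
The plan is to exploit the explicit translation-by-primitive that connects the spaces $\P_\sigma$ and $\P_{\sigma'}$ for two cohomologous closed 2-forms, together with compactness of $M$ to control the translation term.

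Suppose $\sigma$ and $\sigma'$ are closed 2-forms on $M$ whose lifts to $\widetilde M$ are exact, and assume $[\sigma]=[\sigma']\in H^2(M;\R)$. Then there exists a 1-form $\eta\in\Om^1(M)$ with $\sigma-\sigma'=d\eta$. Lift to the universal cover: $\widetilde\eta:=p^*\eta\in\Om^1(\widetilde M)$ satisfies $d\widetilde\eta=\widetilde\sigma-\widetilde{\sigma'}$. This immediately gives the bijection
\bea
\Phi:\P_\sigma\longrightarrow\P_{\sigma'},\qquad \Phi(\theta):=\theta-\widetilde\eta,
\eea
since $d(\theta-\widetilde\eta)=\widetilde\sigma-(\widetilde\sigma-\widetilde{\sigma'})=\widetilde{\sigma'}$, with inverse given by adding $\widetilde\eta$.

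The key observation is that $\widetilde\eta$ has uniformly bounded pointwise norm on $\widetilde M$: because $\widetilde g=p^*g$ and $\widetilde\eta=p^*\eta$, the function $z\mapsto|\widetilde\eta_z|_{\widetilde g}$ is the pullback of $m\mapsto|\eta_m|_g$, which attains a maximum $C<\infty$ on the closed (hence compact) manifold $M$. Therefore for every $\theta\in\P_\sigma$ and every $z\in\widetilde M$,
\bea
\bigl|\Phi(\theta)_z\bigr|_{\widetilde g}\leq|\theta_z|_{\widetilde g}+C,
\eea
and symmetrically $|\theta_z|_{\widetilde g}\leq|\Phi(\theta)_z|_{\widetilde g}+C$.

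Taking suprema over $B_x(s)$ and then infima over $\P_\sigma$ (respectively $\P_{\sigma'}$), one obtains $u_{\sigma'}(s)\leq u_\sigma(s)+C$ and $u_\sigma(s)\leq u_{\sigma'}(s)+C$ for all $s\geq 0$. With $c:=\max(1,C)$ this yields $u_\sigma(s)\leq c(u_{\sigma'}(s)+1)$ and vice versa, i.e.\ $u_\sigma\sim u_{\sigma'}$. Combined with the metric/basepoint invariance noted in the paragraph preceding the proposition, this shows the growth type depends only on $[\sigma]\in H^2(M;\R)$.

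There is no real obstacle here: the argument is essentially a one-line shift of primitives. The only point that needs care is justifying that the ``translation'' $\widetilde\eta$ is bounded on the (non-compact) universal cover, and this is where the hypothesis that $M$ is closed enters in an essential way through the equivariance $\widetilde\eta=p^*\eta$.
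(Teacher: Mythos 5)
Your argument is correct and follows essentially the same route as the paper: pass to the bijection $\P_\sigma\to\P_{\sigma'}$ given by translating by a lifted primitive of $\sigma-\sigma'$, then use compactness of $M$ (via equivariance of the pullback) to bound that translation uniformly on $\widetilde M$, yielding $u_{\sigma'}\leq u_\sigma+C$ and the symmetric estimate. The only cosmetic difference is the sign convention on the primitive; the content matches the paper's proof.
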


\begin{proof}
Let $\sigma$, $\sigma'$ be closed cohomologous 2-forms 
on a closed manifold $M$
with $\widetilde\sigma$, $\widetilde\sigma'$ exact.
There exists a 1-form $\xi$ such that
$\sigma'=\sigma+d\xi$.
We fix $x\in\widetilde M$ and compute
\bean
u_{\sigma'}(s)
=&\inf_{\theta\in\P_{\sigma'}}\sup_{z\in B_x(s)}
|\theta_z|_{\widetilde g}\\
=&\inf_{\theta\in\P_{\sigma}}\sup_{z\in B_x(s)}
|(\theta-\widetilde\xi)_z|_{\widetilde g}\\
\leq&\inf_{\theta\in\P_{\sigma}}\sup_{z\in B_x(s)}
|\theta_z|_{\widetilde g}
+\sup_{z\in B_x(s)}
|\widetilde\xi_z|_{\widetilde g}\\
\leq&\inf_{\theta\in\P_{\sigma}}\sup_{z\in B_x(s)}
|\theta_z|_{\widetilde g}
+\max_{z\in M}
|\xi_z|_{g}\\
=& u_\sigma(s)+C,
\eea
where $C=\max_{z\in M}|\xi_z|_{g}$.
In a similar way we obtain 
$u_{\sigma}(s)\leq u_{\sigma'}(s)+C.$
Hence $u_\sigma(s)\sim u_{\sigma'}(s)$.
\end{proof}

\begin{Ex}\label{ex:tor}
Consider the standard symplectic torus 
$(\T^{2n}=\R^{2n}/\Z^{2n},\,\om=\Sigma_{i=1}^{n}dx_i\wedge dy_i)$
with the metric induced by the Euclidean metric on $\R^{2n}$.
Since $\widetilde\om=d(\Sigma_{i=1}^{n}x_i dy_i)
\in\Om^2(\R^{2n})$ and
$\Sigma_{i=1}^{n}x_i dy_i$ has linear growth with
respect to the Euclidean metric on $\R^{2n}$,
it follows that $u_{\om}(s)\lesssim s$.

For any primitive $\alpha\in\Om^1(\R^{2n})$ of $\widetilde\om$,
we obtain
\bean
\frac{\pi^n}{n!}s^{2n}
=\int_{B_x(s)}\widetilde \om=\int_{\p B_x(s)}\alpha
\leq\sup_{z\in \p B_x(s)}|\alpha_z|\cdot\int_{\p B_x(s)}1
\leq\sup_{z\in \p B_x(s)}|\alpha_z|\cdot
\frac{2\pi^n}{(n-1)!}s^{2n-1}.
\eea
This implies
$\sup_{z\in B_x(s)}|\alpha_z|\geq\frac{s}{2n}$
and hence we conclude that $u_{\om}(s)\sim s$.
\end{Ex}

\begin{Ex}\label{ex:g2}
Let $(\Sigma_g,\om)$ be a closed oriented surface 
of genus $g\geq 2$ with a volume form.
First represent $M$ as $\HH/G$,
where $\HH=\{x+yi\in\C : y>0\}$ 
is the hyperbolic upper half-plane
and $G$ is a discrete group of isometries.
Without loss of generality, we may assume that 
the lift $\widetilde \om$ of $\om$ to the universal cover $\HH$
coincides with the hyperbolic area form 
$\frac{1}{y^2}dx\wedge dy$.
Note that $\widetilde\om=d\big(\frac{1}{y}dx\big)$ and
$\frac{1}{y}dx$ is bounded with respect to the hyperbolic metric
$ds^2=\frac{1}{y^2}(dx^2+dy^2)$ on $\HH$.
By the definition, $(\Sigma_g,\om)$ is
a symplectically hyperbolic manifold and
every symplectically hyperbolic manifold
satisfies $u_{\om}(s)\sim 1$.
\end{Ex}

\begin{Prop}\label{prop:cofineq}
Let $M$, $N$ be closed manifolds 
and let $h:N\to M$ be an immersion.
Let $\sigma\in\Om^2(M)$ be a closed 2-form
which is exact on $\widetilde M$, then
$u_{h^*\sigma}(s)\lesssim u_{\sigma}(s).$
\end{Prop}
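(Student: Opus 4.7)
The plan is to lift $h$ to a map $\widetilde h:\widetilde N\to\widetilde M$ between universal covers, pull back primitives of $\widetilde\sigma$ along $\widetilde h$, and read off the norm estimate on balls. Since the growth type of $u_{h^*\sigma}$ is independent of the Riemannian metric chosen on $N$, I would work with the pullback metric $g_N:=h^*g$, which is a bona fide Riemannian metric precisely because $h$ is an immersion. The point of this choice is that $h$, and hence its lift $\widetilde h$, becomes an isometric immersion, which short-circuits all Lipschitz-constant bookkeeping.

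Concretely, let $p:\widetilde M\to M$ and $q:\widetilde N\to N$ be the universal coverings, and fix basepoints $x\in\widetilde M$ and $y\in\widetilde N$. Since $\widetilde N$ is simply connected, the lifting criterion produces a map $\widetilde h:\widetilde N\to\widetilde M$ with $p\circ\widetilde h=h\circ q$ and $\widetilde h(y)=x$. A one-line computation gives
\[
d(\widetilde h^*\theta)=\widetilde h^*\widetilde\sigma=\widetilde h^*p^*\sigma=q^*h^*\sigma=\widetilde{h^*\sigma},
\]
so pullback by $\widetilde h$ sends $\P_\sigma$ into $\P_{h^*\sigma}$. With the metric choice above, $\widetilde h$ is a local isometry, hence $1$-Lipschitz, so $\widetilde h(B_y(s))\subset B_x(s)$, and pointwise $|\widetilde h^*\theta|_{\widetilde g_N}(z)\le|\theta|_{\widetilde g}(\widetilde h(z))$. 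Feeding both bounds into the definition of $u_{h^*\sigma}$ gives
\[
u_{h^*\sigma}(s)\le\inf_{\theta\in\P_\sigma}\sup_{z\in B_y(s)}|\widetilde h^*\theta|_{\widetilde g_N}\le\inf_{\theta\in\P_\sigma}\sup_{w\in B_x(s)}|\theta|_{\widetilde g}=u_\sigma(s),
\]
which is slightly stronger than the desired $u_{h^*\sigma}(s)\lesssim u_\sigma(s)$.

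The delicate point is the choice of metric on $N$. With an arbitrary metric one only has $|d\widetilde h|\le K$ for some constant $K$, and the argument above would only give $u_{h^*\sigma}(s)\le K\,u_\sigma(Ks)$, which is not obviously $\lesssim u_\sigma(s)$ without a doubling-type property for $u_\sigma$. Using $g_N=h^*g$ forces $K=1$ and eliminates the issue, after which the metric-invariance of the growth type transports the inequality to any other metric on $N$ for free. The remaining ingredients---existence of the lift $\widetilde h$ via simple connectedness of $\widetilde N$, and the naturality $d\circ\widetilde h^*=\widetilde h^*\circ d$---are standard.
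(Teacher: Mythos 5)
Your proof is correct and follows essentially the same route as the paper's: choose the pullback metric $h^*g$ on $N$, lift $h$ to the universal covers, pull back a primitive of $\widetilde\sigma$ along $\widetilde h$, use the pointwise norm estimate $|(\widetilde h^*\theta)_z|_{\widetilde h^*\widetilde g}\le|\theta_{\widetilde h(z)}|_{\widetilde g}$, and transport from $B_{\widetilde N}(s)$ to $B_{\widetilde M}(s)$ via the $1$-Lipschitz property of $\widetilde h$. The only thing you make more explicit than the paper is the observation that choosing $h^*g$ forces the Lipschitz constant to be $1$ and thereby avoids any dilation in the argument of $u_\sigma$; that remark is a genuine clarification, but the underlying argument is the same. (Minor terminological nit: an isometric immersion is not a \emph{local isometry} in the usual Riemannian sense unless $\dim N=\dim M$, but your use of it --- path lengths non-increase, hence $1$-Lipschitz --- is correct.)
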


\begin{proof}
Let $g$ be a Riemannian metric on $M$,
then $h^*g$ gives a Riemannian metric on $N$.
We then have the following commutative diagram:
\bean
\xymatrix{
(\widetilde N,\widetilde h^*\widetilde g)
\ar[r]^{\widetilde h} \ar[d]^{p_N}
&(\widetilde M,\widetilde g) 
\ar[d]^{p_M}\\
(N,h^*g) \ar[r]^{h}
&(M,g)
}
\eea
where $\widetilde{[-]}$ are the lifts of $[-]$
to the universal covers.
Now choose a primitive $\theta\in\Om^1(\widetilde M)$
of $\widetilde \sigma$,
then $\widetilde h^*\theta\in\Om^1(\widetilde N)$
is a primitive of $\widetilde h^*\widetilde \sigma$.
Pick a point $z\in\widetilde N$, and note that
\bean
|(\widetilde h^*\theta)_z|_{\widetilde h^*\widetilde g}
=&\max\{(\widetilde h^*\theta)_z[v] : \|v\|
_{\widetilde h^*\widetilde g}=1,\, v\in T_z\widetilde N\}\\
=&\max\{\theta_{\widetilde h(z)}[\widetilde h_*v] : 
\|\widetilde h_*v\|_{\widetilde g}=1,\,v\in T_z\widetilde N\}\\
\leq&\max\{\theta_{\widetilde h(z)}(v) : 
\|v\|_{\widetilde g}=1,\,v\in T_{\widetilde h(z)}\widetilde M\}\\
=&|\theta_{\widetilde h(z)}|_{\widetilde g}.
\eea
We can thus estimate
\bean
u_{h^*\sigma}(s)
\sim&\inf_{\theta\in\P_{h^*\sigma}}
\sup_{z\in B_{\widetilde N,x}(s)}
|\theta_z|_{\widetilde h^*\widetilde g}\\
\lesssim&\inf_{\theta\in\P_\sigma}
\sup_{z\in B_{\widetilde N,x}(s)}
|(\widetilde h^*\theta)_z|_{\widetilde h^*\widetilde g}\\
\lesssim&\inf_{\theta\in\P_\sigma}
\sup_{z\in B_{\widetilde N,x}(s)}
|\theta_{\widetilde h(z)}|_{\widetilde g}\\
\lesssim&\inf_{\theta\in\P_\sigma}
\sup_{z\in B_{\widetilde M,\widetilde h(x)}(s)}
|\theta_z|_{\widetilde g}\\
\sim& u_\sigma(s),
\eea
which concludes the proof.

\end{proof}

\begin{Cor}\label{prop:atoroidal}
If $(M,\om)$ is a symplectically hyperbolic manifold, then
\bean
\int_{f(\T^2)}\om=0
\eea
for any immersion $f:\T^2\to M$.
\end{Cor}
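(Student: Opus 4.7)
The plan is to contradict the hypothesis $\int_{f(\T^2)}\om\neq 0$ by comparing the growth of the cofilling function of $f^*\om$ on $\T^2$, pulled back from the bounded growth on $M$, against the known linear growth of a nonzero-cohomology class on the torus.

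First I would invoke Proposition \ref{prop:cofineq} for the immersion $f:\T^2\to M$: it yields
\bean
u_{f^*\om}(s)\lesssim u_{\om}(s).
\eea
Since $(M,\om)$ is symplectically hyperbolic, $\om$ admits a bounded primitive on $\widetilde M$, so taking this bounded 1-form as the choice of $\theta\in\P_\om$ in the definition of the cofilling function immediately gives $u_\om(s)\sim 1$ (this is exactly the observation made at the end of Example \ref{ex:g2}). Combining, $u_{f^*\om}(s)\lesssim 1$, i.e.\ the cofilling function of $f^*\om$ on the torus is bounded.

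Next I would argue by contradiction. Suppose $\int_{\T^2}f^*\om\neq 0$. Since $H^2(\T^2;\R)\cong\R$ is generated by the class of the standard area form $\om_0=dx_1\wedge dy_1$, the hypothesis forces $[f^*\om]=c[\om_0]$ for some $c\neq 0$. By Proposition \ref{prop:coh}, the growth type of the cofilling function depends only on the cohomology class, hence
\bean
u_{f^*\om}(s)\sim u_{c\om_0}(s).
\eea
Scaling a closed 2-form by $c\neq 0$ scales every primitive by $c$ as well, so $u_{c\om_0}(s)=|c|\,u_{\om_0}(s)\sim u_{\om_0}(s)$, and Example \ref{ex:tor} (case $n=1$) gives $u_{\om_0}(s)\sim s$.

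This produces a function which is both bounded (from the first paragraph) and of linear growth (from the second), a contradiction; hence $\int_{f(\T^2)}\om=0$. There is no real obstacle here: the only point one needs to check carefully is that scaling a form by a nonzero constant preserves the growth type of its cofilling function, which is immediate from the definition as $\P_{c\sigma}=c\cdot\P_\sigma$.
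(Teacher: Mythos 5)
Your proof is correct and takes essentially the same approach as the paper: contradiction via the chain $s\sim u_{f^*\om}(s)\lesssim u_\om(s)\sim 1$, combining Proposition \ref{prop:cofineq}, Proposition \ref{prop:coh}, Example \ref{ex:tor}, and the boundedness of $u_\om$ for a symplectically hyperbolic manifold. You merely make explicit the scaling observation ($\P_{c\sigma}=c\cdot\P_\sigma$) that the paper leaves implicit when passing from $[f^*\om]\neq 0$ to the linear growth rate of Example \ref{ex:tor}.
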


\begin{proof}
If the integral is non-zero, 
then $[f^*\om]\neq 0$ in $H^2(\T^2,\R)$.
By Proposition \ref{prop:coh}, Example \ref{ex:tor} and Proposition \ref{prop:cofineq},
we deduce the following contradiction:
\bean
s\sim u_{f^*\om}(s)\lesssim u_\om(s)\sim 1.
\eea
\end{proof}

Indeed, Corollary \ref{prop:atoroidal} holds true 
for any smooth map $f:\T^2\to M$ and
the proof is almost the same, see \cite{Ked}.

\subsection{Flux homomorphism}
Let $\varphi\in{\mathit{Symp}}_0(M,\om)$
and $\{\varphi_t\}_{t\in[0,1]}$ a path of symplectomorphisms
such that $\varphi_0=\one$ and $\varphi_1=\varphi$. 
Let $Y_t$ be the generating vector field,
\bea\label{eqn:svf}
\frac{d}{dt}\varphi_t=Y_t\circ\varphi_t.
\eea
Since the Lie derivative $\L_{Y_t}\om$ vanishes, 
we get a closed 1-form $\iota_{Y_t}\om$ 
for each $t\in[0,1]$. 

\begin{Def}\label{def:flux}
The {\em flux homomorphism} 
$\widetilde{\mathit{Flux}}:\widetilde{\mathit{Symp}}_0(M,\om) 
\to H^1(M,\R)$ is defined as
\bean
\widetilde{\mathit{Flux}}\big(\{\varphi_t\}\big)=
\int_0^1\left[\iota_{Y_t}\om\right]dt,
\eea
where $[\alpha]$ is the cohomology class of a form $\alpha$.
\end{Def}
The kernel of $\widetilde{\mathit{Symp}}_0(M,\om)
\to{\mathit{Symp}}_0(M,\om)$ can be identified with
the fundamental group $\pi_1({\mathit{Symp}}_0(M,\om))$.
We denote by $\Gamma_\om\subset H^1(M,\R)$ 
the image of $\pi_1({\mathit{Symp}}_0(M,\om))$ 
by $\widetilde{\mathit{Flux}}$ 
and we call $\Gamma_\om$ 
the {\em flux group} of $(M,\om)$.
Then $\widetilde{\mathit{Flux}}$ descends to a homomorphism
\bean
\mathit{Flux}\colon \mathit{Symp}_0(M,\om)\to H^1(M,\R)/\Gamma_\om.
\eea
The next well-known fact is proved in \cite{MS}.

\begin{Thm}\label{thm:flux}
$\mathit{Ham}(M,\om)=\ker\mathit{Flux}$.
\end{Thm}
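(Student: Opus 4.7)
The plan is to establish the two inclusions of the equality $\mathit{Ham}(M,\om)=\ker\mathit{Flux}$ separately. The inclusion $\mathit{Ham}(M,\om)\subseteq\ker\mathit{Flux}$ is immediate from the definitions: if $\varphi=\varphi_1$ for some Hamiltonian isotopy $\{\varphi_t\}$ generated by a Hamiltonian vector field $X_{H_t}$ with $\iota_{X_{H_t}}\om=dH_t$, then $[\iota_{X_{H_t}}\om]=0\in H^1(M,\R)$ for every $t$, so $\widetilde{\mathit{Flux}}(\{\varphi_t\})=0$ and hence $\mathit{Flux}(\varphi)=0$.

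For the reverse inclusion, fix $\varphi\in\mathit{Symp}_0(M,\om)$ with $\mathit{Flux}(\varphi)=0$ and choose any symplectic isotopy $\{\phi_t\}$ from $\one$ to $\varphi$ with generator $Y_t$. Since $\widetilde{\mathit{Flux}}(\{\phi_t\})\in\Gamma_\om$, by definition of the flux group there is a loop $\{\eta_t\}$ in $\mathit{Symp}_0(M,\om)$ based at $\one$ with $\widetilde{\mathit{Flux}}(\{\eta_t\})=-\widetilde{\mathit{Flux}}(\{\phi_t\})$. First I would verify that $\widetilde{\mathit{Flux}}$ is additive under composition of isotopies: if $Z_t$ generates $\{\eta_t\}$, the generator of $\{\phi_t\circ\eta_t\}$ is $Y_t+(\phi_t)_*Z_t$, and using $\phi_t^*\om=\om$ together with the fact that $\phi_t$ acts trivially on $H^1(M,\R)$ (being isotopic to the identity) yields $[\iota_{(\phi_t)_*Z_t}\om]=[\iota_{Z_t}\om]$. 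Hence after replacing $\{\phi_t\}$ by $\{\phi_t\circ\eta_t\}$, which has the same endpoints, we may assume $\widetilde{\mathit{Flux}}(\{\phi_t\})=0$ as a class in $H^1(M,\R)$ itself.

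It remains to prove the central lemma: any symplectic isotopy $\{\phi_t\}$ from $\one$ to $\phi_1$ with $\widetilde{\mathit{Flux}}(\{\phi_t\})=0\in H^1(M,\R)$ has its endpoint $\phi_1$ in $\mathit{Ham}(M,\om)$. The strategy is to deform the given isotopy, with endpoints fixed, to one whose generator pairs with $\om$ to an exact $1$-form at every time. Fixing a Riemannian metric on $M$, I would use Hodge decomposition to write $\iota_{Y_t}\om=dh_t+\alpha_t$ with $\alpha_t$ harmonic and depending smoothly on $t$; the vanishing of the flux together with uniqueness of harmonic representatives forces $\int_0^1\alpha_t\,dt=0$ in the finite-dimensional space of harmonic $1$-forms. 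Consequently the primitive $A(t):=\int_0^t\alpha_s\,ds$ is a closed loop in a vector space, hence null-homotopic, and a null-homotopy lifts to a two-parameter family of symplectic isotopies $\phi_{s,t}$ with $\phi_{s,0}=\one$, $\phi_{s,1}=\phi_1$, such that the harmonic part of the generator of $\phi_{1,t}$ vanishes identically; then $\{\phi_{1,t}\}$ is a Hamiltonian isotopy from $\one$ to $\phi_1$.

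The main obstacle is this last step: translating the purely cohomological null-homotopy into an honest deformation of isotopies with \emph{both} endpoints fixed. A naive attempt---post-composition with the flow of the symplectic vector field dual to $-A(t)$---produces a path whose endpoint is not the identity, so $\phi_1$ is not preserved. The classical way around this (see \cite{MS}) is a Moser-type interpolation: subdivide $[0,1]$ into short subintervals and, on each piece, absorb the harmonic defect by a small Hamiltonian correction; provided the subdivision is fine enough, the corrections can be patched together into a smooth two-parameter family with the required boundary behavior. Evaluating at the homotopy parameter $s=1$ then yields a Hamiltonian isotopy from $\one$ to $\phi_1$, which shows $\phi_1\in\mathit{Ham}(M,\om)$ and completes the proof.
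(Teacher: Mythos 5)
The paper does not prove this theorem; it cites \cite{MS} and moves on, so there is no in-paper argument to compare against. Judged on its own terms, your proposal has the right skeleton but a genuine gap in the last step.

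The easy inclusion and the reduction (compose with a loop to pass from $\widetilde{\mathit{Flux}}(\{\phi_t\})\in\Gamma_\om$ to $\widetilde{\mathit{Flux}}(\{\phi_t\})=0$, using additivity of $\widetilde{\mathit{Flux}}$ and the fact that $\phi_t$ acts trivially on $H^1$) are correct. The Hodge decomposition $\iota_{Y_t}\om=dh_t+\alpha_t$ and the observation $\int_0^1\alpha_t\,dt=0$ are also correct. The problem is the final step. You correctly observe that post-composing with the flow of the time-dependent field dual to $-\alpha_t$ ruins the endpoint, but your proposed remedy---``subdivide and absorb the harmonic defect by a small Hamiltonian correction''---cannot work: a Hamiltonian correction on a subinterval contributes an exact one-form to the generator, hence has zero partial flux, and therefore cannot cancel the nonzero partial flux $a(t_{k+1})-a(t_k)$ coming from the harmonic part. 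The harmonic defect is by definition the non-exact component; no amount of Hamiltonian repair can erase it on a subinterval. Moreover, even if it could, a null-homotopy of the curve $A(t)=\int_0^t\alpha_s\,ds$ in $\mathcal H^1$ does not by itself produce a homotopy of isotopies with the two endpoints $\one$ and $\phi_1$ held fixed; constructing that lift is precisely the content of the theorem, so as written the argument is circular.

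The correct repair is a small but decisive change of the ``naive'' construction you dismissed. Let $a(t)=\int_0^t[\iota_{Y_r}\om]\,dr$, so $a(0)=a(1)=0$, and choose a smooth family of harmonic forms $\beta_t$ with $[\beta_t]=a(t)$ and $\beta_0=\beta_1=0$. For each \emph{fixed} $t$, let $W_t$ be the autonomous symplectic vector field with $\iota_{W_t}\om=-\beta_t$ and let $\chi_t$ be its \emph{time-one} flow. Then $t\mapsto\chi_t$ is a smooth loop in $\mathit{Symp}_0(M,\om)$ based at the identity, because $\chi_0=\chi_1=\one$. Setting $\widetilde\psi_t:=\phi_t\circ\chi_t$ gives an isotopy with $\widetilde\psi_0=\one$ and $\widetilde\psi_1=\phi_1$, i.e.\ both endpoints are preserved. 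Its generator is $Y_t+(\phi_t)_*V_t$ with $V_t=\dot\chi_t\circ\chi_t^{-1}$, and the variation-of-constants formula for the parameter derivative of a flow, together with the fact that symplectomorphisms isotopic to the identity act trivially on $H^1(M,\R)$, yields $[\iota_{V_t}\om]=[\iota_{\dot W_t}\om]=-[\dot\beta_t]=-[\iota_{Y_t}\om]$, whence $[\iota_{\widetilde Y_t}\om]=0$ for every $t$. Thus $\{\widetilde\psi_t\}$ is Hamiltonian and $\phi_1\in\mathit{Ham}(M,\om)$. This is the standard argument (cf.\ \cite{MS}); your sketch stops just short of it.
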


\begin{Prop}[\cite{Ked}, \cite{Pol}]\label{prop:fluxgp}
Let $(M,\om)$ be a symplectically hyperbolic manifold,
then the flux group $\Gamma_\om=0$ in $H^1(M,\R)$.
\end{Prop}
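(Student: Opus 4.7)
The plan is to show that any element of the flux group must vanish by pairing it with an arbitrary cohomology class in $H_1(M,\R)$ and reinterpreting the pairing as a symplectic area over a torus, which then vanishes by the atoroidal Corollary~\ref{prop:atoroidal} (and its extension to smooth maps mentioned in the remark after it).

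First I would take a loop $\{\varphi_t\}_{t\in[0,1]}$ in $\mathit{Symp}_0(M,\om)$ with $\varphi_0=\varphi_1=\one$ representing an arbitrary class in $\pi_1(\mathit{Symp}_0(M,\om))$, together with an arbitrary smooth loop $\gamma\colon S^1\to M$. I would then introduce the mapping-torus-style smooth map
\bean
f=f_{\varphi,\gamma}\colon \T^2\to M,\qquad f(t,s):=\varphi_t(\gamma(s)),
\eea
which is well defined precisely because $\varphi_1=\varphi_0=\one$. This is the central construction; everything else is formal.

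Next I would compute $\int_{\T^2}f^*\om$ by plugging in the definition of the generating vector field $Y_t$ from \eqref{eqn:svf}. One gets
\bean
(f^*\om)_{(t,s)}(\partial_t,\partial_s)
=\om_{\varphi_t(\gamma(s))}\bigl(Y_t(\varphi_t(\gamma(s))),\,d\varphi_t(\gamma'(s))\bigr)
=(\iota_{Y_t}\om)_{\varphi_t(\gamma(s))}\bigl(d\varphi_t\,\gamma'(s)\bigr),
\eea
so integrating over $\T^2$ and using that each 1-form $\iota_{Y_t}\om$ is closed and that $\varphi_t\circ\gamma$ is homotopic to $\gamma$ (via $s\mapsto\varphi_{ts}\circ\gamma$, which makes sense since $\varphi_0=\one$), one obtains
\bean
\int_{\T^2}f^*\om
=\int_0^1\!\!\int_{\varphi_t\circ\gamma}\iota_{Y_t}\om\,dt
=\int_0^1\!\!\int_\gamma\iota_{Y_t}\om\,dt
=\bigl\langle\widetilde{\mathit{Flux}}(\{\varphi_t\}),[\gamma]\bigr\rangle.
\eea

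Finally, by the extension of Corollary~\ref{prop:atoroidal} to arbitrary smooth maps $\T^2\to M$ (noted in the paragraph following its proof), the left-hand side vanishes. Since $\gamma$ was arbitrary and $H_1(M,\R)$ is generated by classes of smooth loops, this forces $\widetilde{\mathit{Flux}}(\{\varphi_t\})=0$ in $H^1(M,\R)$, and hence $\Gamma_\om=0$. The main obstacle is really just setting up the torus map $f$ correctly so that its symplectic area pairs tautologically with flux; once that identification is in hand, the symplectic hyperbolicity (in the form of the atoroidal Corollary) finishes the argument immediately.
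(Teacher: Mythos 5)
Your proof is correct and is precisely the argument the paper defers to: the paper itself gives no proof of Proposition~\ref{prop:fluxgp} (it cites K\c{e}dra and Polterovich), but it does flag the atoroidal property as the key ingredient, and your torus construction $f(t,s)=\varphi_t(\gamma(s))$ together with the extension of Corollary~\ref{prop:atoroidal} to arbitrary smooth maps is exactly the standard way to pair $\widetilde{\mathit{Flux}}(\{\varphi_t\})$ against $[\gamma]$ and conclude it vanishes. You were also right to invoke the smooth-map version of the atoroidal property rather than the immersion version, since $f$ need not be an immersion.
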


The {\em atoroidal} property of $\om$, 
$\int_{f(\T^2)}\om=0$ for any smooth $f:\T^2\to M$,
is important in the proof of Proposition \ref{prop:fluxgp}.
We then have
\bean
\mathit{Flux}\colon \mathit{Symp}_0(M,\om)\to H^1(M,\R).
\eea

\subsection{Hamiltonian structures}
Let $\Sigma$ be a closed connected orientable manifold of dimension $2n+1$.
A {\em Hamiltonian structure} on $\Sigma$ is a closed 2-form $\om$ 
such that $\om^{n}$ is nowhere vanishing.
So its kernel $\ker\om$ defines a 1-dimensional foliation
which we call the {\em characteristic foliation} of $\om$.

\begin{Def}\label{def:stb}
A Hamiltonian structure $(\Sigma,\om)$ is called {\em stable} 
if there exists a 1-form $\lambda$ 
such that
\bea\label{eqn:stb}
\ker\om\subset\ker d\lambda, \qquad \lambda\wedge\om^{n}>0.
\eea
We call the 1-form $\lambda$ a {\em stabilizing 1-form}.
This structure defines the {\em Reeb vector field R} by
\bean
\lambda(R)=1,\qquad \iota_R\om=0.
\eea
\end{Def}

\begin{Def}
A Hamiltonian structure is called {\em virtually contact}
if there is a covering $p:\widehat \Sigma\to \Sigma$ and 
a primitive $\lambda\in\Om^1(\widehat\Sigma)$ of $p^*\om$ such that
\bea\label{eqn:vcs}
\sup_{x\in\widehat{\Sigma}}|\lambda_x|\leq C<\infty,\qquad
\inf_{x\in\widehat{\Sigma}}\lambda(R)\geq\mu>0,
\eea
where $|\cdot|$ is the lifting of a metric on $\Sigma$ 
and $R$ is the pullback of a non-vanishing vector field generating $\ker\om$.
\end{Def}

\section{Hamiltonian structures on mapping tori}

\subsection{Standard and twisted Hamiltonian structures}

For a symplectically hyperbolic manifold $(M,\om)$
and a symplectomorphism 
$\varphi\in\mathit{Symp}_0(M,\om)$,
we consider the {\em mapping torus $M_\varphi$ of 
$M$ with respect to $\varphi$},
\bea\label{sec:maptorus}
M_\varphi=\frac{M\times [0,1]}{(m,0)\sim(\varphi(m),1)}.
\eea
Now we consider two Hamiltonian structures on the mapping tori $M_\one$, $M_\varphi$.
The trivial mapping torus 
$M_{\one}\cong M\times\S^1$
carries the Hamiltonian structure $\om_\one:=\pi^*\om$,
where $\pi:M_{\one}\to M:(m,\theta)\mapsto m$.
The non-trivial one $M_\varphi$ carries
the Hamiltonian structure $\om_\varphi:=\pi_\varphi^*\om$,
where $\pi_\varphi:M_\varphi\to M:(m,\theta)\mapsto m$.
The Hamiltonian structure 
$\om_\varphi$ is a well-defined 2-form on
$M_\varphi$, since $\varphi^*\om=\om$.
Note that the kernel of both Hamiltonian structures 
are spanned by ${\frac{\p}{\p\theta}}$.
Let us choose a path of symplectomorphisms 
$\{\varphi_t\}_{t\in[0,1]}$ from 
$\varphi_0=\one$ to $\varphi_1=\varphi$.
The twisting map $f:M_{\one} \to M_\varphi$ 
is defined by
\bea\label{eqn:f}
f(m,\theta)=(\varphi_\theta(m),\theta).
\eea

\begin{Lemma}\label{lem:twham}
Let $(M_\one,\om_\one)$, $(M_{\varphi},\om_\varphi)$ 
and $f:M_{\one}\to M_\varphi$ be as above,
then we obtain
\bea\label{eqn:tw-st0}
f^*\om_\varphi=\om_{\one}
-\pi^* (\iota_{Y_\theta}\om)\wedge\pi^*_{\theta}d\theta,
\eea
where $\pi_{\theta}:M_{\one}\to \S^1:(m,\theta)\mapsto \theta$.
\end{Lemma}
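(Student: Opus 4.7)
The plan is to prove the identity pointwise by evaluating both sides on pairs of tangent vectors at a point $(m,\theta)\in M_\one = M\times\S^1$. A tangent vector decomposes as $(v, a\p_\theta)$ with $v\in T_m M$ and $a\in\R$. Since $f(m,\theta) = (\varphi_\theta(m),\theta)$ and $\frac{\p}{\p\theta}\varphi_\theta(m) = Y_\theta(\varphi_\theta(m))$ by \eqref{eqn:svf}, the differential is
\bean
df_{(m,\theta)}(v, a\p_\theta) = \bigl((d\varphi_\theta)_m(v) + a\, Y_\theta(\varphi_\theta(m)),\; a\p_\theta\bigr).
\eea

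Next I would substitute this into $\om_\varphi = \pi_\varphi^*\om$ and use that $\pi_\varphi\circ f = \varphi_\theta\circ\pi$ to obtain
\bean
(f^*\om_\varphi)_{(m,\theta)}\bigl((v_1, a_1\p_\theta),(v_2, a_2\p_\theta)\bigr) = \om_{\varphi_\theta(m)}\bigl(d\varphi_\theta v_1 + a_1 Y_\theta,\; d\varphi_\theta v_2 + a_2 Y_\theta\bigr),
\eea
and expand bilinearly. Antisymmetry kills the $a_1 a_2$-term, leaving three contributions. The $\om(d\varphi_\theta v_1, d\varphi_\theta v_2)$ piece equals $\om_m(v_1, v_2)$ by the symplectic invariance $\varphi_\theta^*\om = \om$, which is exactly $(\om_\one)_{(m,\theta)}\bigl((v_1,a_1\p_\theta),(v_2,a_2\p_\theta)\bigr)$. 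The two remaining cross terms combine, after a sign rearrangement via antisymmetry, into an expression of the form $-\alpha(v_1)\,a_2 + \alpha(v_2)\,a_1$ for a suitable 1-form $\alpha$ on $M$; this is precisely the pattern of a wedge product with $\pi_\theta^* d\theta$.

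The only step requiring care, and the one I expect to be the main obstacle, is identifying the resulting 1-form $\alpha$ correctly with $\iota_{Y_\theta}\om$: the relevant quantity $\om_{\varphi_\theta(m)}\bigl(Y_\theta(\varphi_\theta(m)),\, d\varphi_\theta v\bigr)$ is, by the symplectic invariance $\varphi_\theta^*\om = \om$, the pullback of $\iota_{Y_\theta}\om$ along $\pi_\varphi\circ f = \varphi_\theta\circ\pi$, which is the reading of the notation $\pi^*(\iota_{Y_\theta}\om)$ in \eqref{eqn:tw-st0}. A final sign check then matches the cross terms with $-\pi^*(\iota_{Y_\theta}\om)\wedge\pi_\theta^* d\theta$ and completes the proof.
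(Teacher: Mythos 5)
The proposal is correct and follows essentially the same route as the paper's proof: both compute the differential $df$ from $\frac{\p}{\p\theta}\varphi_\theta = Y_\theta\circ\varphi_\theta$, expand $\om_\varphi(df\cdot,df\cdot)$ bilinearly, kill the $a_1a_2$-term by antisymmetry, use $\varphi_\theta^*\om = \om$ to recover $\om_\one$ from the $(d\varphi_\theta v_1, d\varphi_\theta v_2)$ term, and read the remaining cross terms as the wedge with $\pi_\theta^*d\theta$. The interpretational subtlety you anticipate in identifying the cross-term 1-form $\om_{\varphi_\theta(m)}(Y_\theta(\varphi_\theta(m)), d\varphi_\theta\cdot)$ with the notation $\pi^*(\iota_{Y_\theta}\om)$ is handled by the same device in the paper: in display \eqref{eqn:tw-st2} the form $\pi^*(\iota_{Y_\theta}\om)\wedge\pi_\theta^*d\theta$ is silently evaluated at $f(x)$ on the pushed-forward vectors rather than at $x$, which is precisely the reading you propose, so your argument matches the paper on this point as well.
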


\begin{proof}
Let $(m_i,\theta_i)$ be tangent vectors 
in $T_{(m,\theta)}M_\one$, $i=1,2$. 
We identify $T_\theta\S^1$ with $\R$, 
so $\theta_i$ is considered as an element of $\R$.
We compute $f^*\om_\varphi$ as follows:
\bean
(f^*\om_{\varphi})_x\big(( m_1,&\theta_1),( m_2,\theta_2)\big)
=(\om_\varphi)_{f(x)}\big(f_*( m_1,\theta_1),f_*( m_2,\theta_2)\big)\\
=&(\om_\varphi)_{f(x)}\big((d\varphi_\theta(m)[ m_1]+ \theta_1\cdot Y_\theta[\varphi_\theta(m)],\theta_1),
(d\varphi_\theta(m)[ m_2]+\theta_2\cdot Y_\theta[\varphi_\theta(m)],\theta_2)\big)\\
=&(\om_\varphi)_{f(x)}\big((\theta_1\cdot Y_\theta [\varphi_\theta(m)],0),(d\varphi_\theta(m)[ m_2],\theta_2)\big)\\
&+(\om_\varphi)_{f(x)}\big((d\varphi_\theta(m)[ m_1],\theta_1),(\theta_2\cdot Y_\theta [\varphi_\theta(m)],0)\big)\\
&+(\om_\varphi)_{f(x)}\big((\theta_1\cdot Y_\theta [\varphi_\theta(m)],0),(\theta_2\cdot Y_\theta [\varphi_\theta(m)],0)\big)\\
&+\underbrace{(\om_\varphi)_{f(x)}\big((d\varphi_\theta(m)[ m_1],\theta_1),(d\varphi_\theta(m)[ m_2],\theta_2)\big).}
_{=:\diamond}\\
\eea
Here $Y_\theta$ is the vector field 
defined in (\ref{eqn:svf}).
The third summand vanishes since $\om_\varphi$ is skew-symmetric.
In order to simplify the $\diamond$-term we compute
\bea\label{eqn:dia1}
\diamond
=&(\pi_\varphi^*\om)_{f(x)}\big((d\varphi_\theta(m)[ m_1],\theta_1),(d\varphi_\theta(m)[ m_2],\theta_2)\big)\\
=&\om_{\varphi_\theta(m)}(d\varphi_\theta(m)[ m_1],d\varphi_\theta(m)[ m_2])\\
=&(\varphi_\theta^*\om)_m(m_1,m_2)\\
=&\om_m(m_1,m_2),
\eea
where the last equality comes from the assumption that
$\varphi_\theta:M\to M$ is a symplectomorphism.
We also know 
\bea\label{eqn:dia2}
(\om_\one)_x\big(( m_1,\theta_1),( m_2,\theta_2)\big)
=&(\pi^*\om)_x\big(( m_1,\theta_1),( m_2,\theta_2)\big)\\
=&\om_m(m_1,m_2).
\eea
By combining (\ref{eqn:dia1}), (\ref{eqn:dia2}) we obtain
\bea\label{eqn:twmetom}
(\om_{\one})_x\big(( m_1,\theta_1),( m_2,\theta_2)\big)
&=(\om_\varphi)_{f(x)}\big((d\varphi_\theta(m)[ m_1],\theta_1),(d\varphi_\theta(m)[ m_2],\theta_2)\big).
\eea
Hence the difference between 
$f^*\om_{\varphi}$ and $\om_{\one}$ is
\bea\label{eqn:tw-st1}
(f^*\om_{\varphi}-\om_{\one})_x\big(( m_1,\theta_1),( m_2,\theta_2)\big)
=&(\om_\varphi)_{f(x)}\big((\theta_1\cdot Y_\theta [\varphi_\theta(m)],0),(d\varphi_\theta(m)[ m_2],\theta_2)\big)\\
&+(\om_\varphi)_{f(x)}\big((d\varphi_\theta(m)[ m_1],\theta_1),(\theta_2\cdot Y_\theta [\varphi_\theta(m)],0)\big)\\
=&\om_{\varphi_\theta(m)}\big(d\varphi_\theta(m)[ m_1],\theta_2\cdot Y_\theta[\varphi_\theta(m)]\big)\\
&+\om_{\varphi_\theta(m)}\big(\theta_1\cdot Y_\theta[\varphi_\theta(m)],d\varphi_\theta(m)[ m_2]\big).
\eea
On the other hand, we have
\bea\label{eqn:tw-st2}
\big(\pi^*(\iota_{Y_\theta}\om)\wedge \pi^*_{\theta}d\theta\big)_x
&\big(( m_1,\theta_1),( m_2,\theta_2)\big)\\
=&\big(\pi^*(\iota_{Y_\theta}\om)\wedge \pi^*_{\theta}d\theta\big)_{f(x)}
\big((d\varphi_\theta(m)[ m_1],\theta_1),(d\varphi_\theta(m)[ m_2],\theta_2)\big)\\
=&(\iota_{Y_\theta}\om)_{\varphi_\theta(m)}\big(d\varphi_\theta(m)[ m_1]\big)\cdot(d\theta)_\theta(\theta_2)\\
&-(\iota_{Y_\theta}\om)_{\varphi_\theta(m)}\big(d\varphi_\theta(m)[ m_2]\big)\cdot(d\theta)_\theta(\theta_1)\\
=&\om_{\varphi_\theta(m)}\big(Y_\theta[\varphi_\theta(m)],d\varphi_\theta(m)[ m_1]\big)\cdot\theta_2\\
&-\om_{\varphi_\theta(m)}\big(Y_\theta[\varphi_\theta(m)],d\varphi_\theta(m)[ m_2]\big)\cdot\theta_1.
\eea
By combining (\ref{eqn:tw-st1}) and (\ref{eqn:tw-st2}), we conclude (\ref{eqn:tw-st0}).
This proves the lemma.
\end{proof}

\begin{Rmk}\label{rmk:ker}
The Hamiltonian structure $\om_\varphi$ on 
$M_\varphi$ has its kernel
spanned by ${\frac{\p}{\p\theta}}$. 
If there exists a closed orbit
$\gamma:\S^1\to M_\varphi$ of ${\frac{\p}{\p\theta}}$,
then its projection $\pi\circ\gamma:\S^1\to M$ gives us 
a symplectic fixed point with respect to 
$\varphi\in\mathit{Symp}_0(M,\om)$.
One can easily check that 
$(f^*\om_\varphi)({\frac{\p}{\p\theta}}-Y_\theta)=0$.
This implies that the vector field 
${\frac{\p}{\p\theta}}-Y_\theta$ spans
$\ker f^*\om_\varphi$ on $M_{\one}$.
The closed orbit of ${\frac{\p}{\p\theta}}-Y_\theta$ 
also can be interpreted as 
a fixed point of the flow of the vector field of $Y_\theta$.
\end{Rmk}

\subsection{Cofilling function of Hamiltonian structures}

\begin{Prop}\label{prop:lin}
Let $(M,\om)$ be a symplectically hyperbolic manifold and 
$\varphi\in\mathit{Symp}_0(M,\om)$.
If $\mathit{Flux}(\varphi)\neq0$
then the Hamiltonian structure $f^*\om_\varphi\in\Om^2(M_{\one})$ 
satisfies
$u_{f^*\om_\varphi}(s)\gtrsim s.$
\end{Prop}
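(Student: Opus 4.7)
My plan is to exhibit a $2$-torus immersion into $M_{\one}$ whose pullback of $f^*\om_\varphi$ represents a nonzero cohomology class on $\T^2$, and then to deduce the linear lower bound on $u_{f^*\om_\varphi}$ by combining Example \ref{ex:tor}, Proposition \ref{prop:coh}, and Proposition \ref{prop:cofineq}, in direct analogy with the proof of Corollary \ref{prop:atoroidal}.

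Since $\mathit{Flux}(\varphi)\in H^1(M,\R)$ is nonzero, de Rham duality lets me choose a smooth immersed loop $\gamma\colon\S^1\to M$ with
\beq\nonumber
\langle \mathit{Flux}(\varphi),[\gamma]\rangle\,=\,\int_0^1\!\int_\gamma \iota_{Y_\theta}\om\,d\theta\,\neq\, 0.
\eeq
Define the immersion $h\colon\T^2\to M_{\one}=M\times\S^1$ by $h(s,\theta)=(\gamma(s),\theta)$. Using the formula from Lemma \ref{lem:twham}, the first summand $h^*\om_{\one}=(\pi\circ h)^*\om$ vanishes because $\pi\circ h$ factors through the one-dimensional image of $\gamma$, while a direct computation of the second summand gives
\beq\nonumber
h^*\bigl(\pi^*(\iota_{Y_\theta}\om)\wedge \pi_\theta^* d\theta\bigr)\,=\,\gamma^*(\iota_{Y_\theta}\om)\wedge d\theta.
\eeq
Integrating over $\T^2$ therefore yields $\int_{\T^2}h^*(f^*\om_\varphi)=-\langle \mathit{Flux}(\varphi),[\gamma]\rangle\neq 0$, so $[h^*(f^*\om_\varphi)]\neq 0$ in $H^2(\T^2,\R)$.

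Because the cofilling function depends only on the cohomology class (Proposition \ref{prop:coh}) and is invariant up to $\sim$ under rescaling the $2$-form by a nonzero real constant, the computation of Example \ref{ex:tor} applied to any nonzero class on $\T^2$ yields $u_{h^*(f^*\om_\varphi)}(s)\sim s$; Proposition \ref{prop:cofineq} then gives $s\sim u_{h^*(f^*\om_\varphi)}(s)\lesssim u_{f^*\om_\varphi}(s)$, which is the desired bound. A preliminary check is that $f^*\om_\varphi$ lifts to an exact $2$-form on the universal cover $\widetilde{M_{\one}}=\widetilde M\times\R$ so that its cofilling function is actually defined: this follows from exactness of $\widetilde\om$ (symplectic hyperbolicity) together with the fact that any closed $1$-form on the simply connected $\widetilde M$ is exact. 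The only real computation in the proof is the pullback identity $h^*(f^*\om_\varphi)=-\gamma^*(\iota_{Y_\theta}\om)\wedge d\theta$ and the identification of its integral over $\T^2$ with the flux pairing; this is the step I expect to consume the most care, but it is mechanical once the conventions of Lemma \ref{lem:twham} are in hand, and the rest of the argument reduces to a short chain of applications of the earlier propositions.
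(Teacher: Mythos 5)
Your argument is correct and follows essentially the same route as the paper: both choose a loop $\gamma$ detecting the nonzero flux, form the immersion $h(t,\theta)=(\gamma(t),\theta)$ of $\T^2$ into $M_\one$, compute $\int_{\T^2}h^*(f^*\om_\varphi)=-\langle\mathit{Flux}(\varphi),[\gamma]\rangle\neq 0$, and then combine Example \ref{ex:tor} with Proposition \ref{prop:cofineq} to obtain the linear lower bound. Your added preliminary remark that $f^*\om_\varphi$ lifts to an exact form on $\widetilde M\times\R$ (so the cofilling function is defined) is a correct and worthwhile observation that the paper leaves implicit.
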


\begin{proof}
First recall that
\bea\label{eqn:omvarphi}
f^*\om_\varphi=\om_{\one}
-\pi^* (\iota_{Y_\theta}\om)\wedge\pi^*_{\theta}d\theta.
\eea
Since our symplectic manifold $(M,\om)$ is symplectically hyperbolic,
the standard Hamiltonian structure $\om_{\one}=\pi^*\om\in\Om^2(M_{\one})$
admits a bounded primitive $\widetilde\pi^*\lambda$ on $\widetilde M\times \R$,
where $\widetilde\om=d\lambda$ and $\widetilde\pi:\widetilde M\times \R\to\widetilde M$
is the lift of $\pi$.

Now we consider the twisted term 
$\pi^* (\iota_{Y_\theta}\om)\wedge\pi^*_{\theta}d\theta$.
Since $\mathit{Flux}(\varphi)$ is nontrivial in $H^1(M,\R)$,
there exists $a\in\pi_1(M)$ such that
$\langle \mathit{Flux}(\varphi) , \overline a \rangle\neq 0$,
where $\overline a$ stands for the image of $a$ in $H_1(M,\Z)$ 
under the Hurewicz homomorphism.
Choose an immersed curve $\gamma:\S^1\to M$ 
such that $[\gamma]=a$.

Let us consider the induced immersion of $\T^2$
\bean
h:\T^2&\to M_{\one}=M\times \S^1\\
(t,\theta)&\mapsto (\gamma(t),\theta).
\eea
Then with (\ref{eqn:omvarphi}) we calculate
\bean
-\int_{h(\T^2)}f^*\om_\varphi
=&\int_{h(\T^2)}\iota_{Y_\theta}\om\wedge d\theta \\
=&\int_0^1\int_0^1\iota_{Y_\theta}\om
\left[\frac{d\gamma}{dt}\right]dt\,d\theta\\
=&\int_0^1\left(\int_0^1\iota_{Y_\theta}\om\, d\theta\right)
\left[\frac{d\gamma}{dt}\right]dt\\
=&\langle\mathit{Flux(\varphi),\overline a}\rangle\\
\neq&0.
\eea
This implies that $[h^*f^*\om_\varphi]\neq0$ 
in $H^2(\T^2,\R)$.
From Example \ref{ex:tor} and Proposition \ref{prop:cofineq},
we have 
\bean
u_{f^*\om_\varphi}(s)\gtrsim u_{h^*f^*\om_\varphi}(s)\sim s.
\eea
\end{proof}

\begin{Rmk}
Let $(M,\om)$ be a symplectically hyperbolic manifold
and $\varphi\in\mathit{Symp}_0(M,\om)$ 
with $\mathit{Flux}(\varphi)=0$.
The induced Hamiltonian structure $(M_\varphi,\om_\varphi)$
is then a new example of a virtually contact structure.

Indeed, a covering of $M_\varphi$ is given by 
\bean
\widehat M_\varphi
:=\frac{\widetilde M\times[0,1]}
{(\widetilde m,0)\sim(\widetilde \varphi(\widetilde m),1)}
\eea
with a covering map $\widehat p:\widehat M_\varphi\to M_\varphi
:(\widetilde m,\theta)\mapsto(m,\theta)$ 
where $\widetilde\varphi:\widetilde M\to\widetilde M$
is the lift of $\varphi:M\to M$. 
Note that $\widehat M_\one=\widetilde M\times \S^1$.
To obtain a primitive of the Hamiltonian structure,
we need the following notations:
\bean
\widehat f\colon&\widehat M_\one\to\widehat M_\varphi:
(\widetilde m,\theta)\mapsto(\widetilde\varphi_\theta(\widetilde m),\theta);\\
\widehat\pi\colon&\widehat M_\one\to\widetilde M:
(\widetilde m,\theta)\mapsto\widetilde m.
\eea
With this covering we obtain a primitive of
$(\widehat M_\varphi,\widehat p^*\om_\varphi)\cong
(\widehat M_\one,\widehat f^*\widehat p^*\om_\varphi)$
as follows:
\bean
\widehat f^*\widehat p^*\om_\varphi
&=\widehat p^* f^* \om_\varphi\\
&=\widehat p^*(\om_\one-\pi^*(\iota_{Y_\theta}\om)\wedge\pi^*_\theta d\theta)\\
&=d(\underbrace{\widehat\pi^*\lambda-\widehat p^* h_\theta d\theta+Kd\theta}_{=:\widehat \lambda}).
\eea
Here $\lambda$ is a bounded primitive of $\widetilde\om\in\Om^2(\widetilde M)$,
$h_\theta\in C^\infty(M)$ is a time-dependent Hamiltonian function 
corresponding to the Hamiltonian diffeomorphism $\varphi_\theta$
and $K$ is a constant fixed later on.

Since $\lambda, h_\theta$ and $K$ are bounded,
the primitive $\widehat \lambda$ is obviously bounded.
To verify the second condition 
of a virtually contact structure in (\ref{eqn:vcs}),
let $R$ be the lifted vector field of 
$\frac{\p}{\p\theta}-Y_\theta$ 
on $\widehat M_\one$.
Then we have
\bean
\widehat\lambda(R)
=K-\lambda(\widetilde Y_\theta)-\widetilde h_\theta,
\eea
where $\widetilde Y_\theta$, $\widetilde h_\theta$ are
the lifts of $Y_\theta$, $h_\theta$ to the universal cover 
$\widetilde M$.
We guarantee $\widehat \lambda(R)\geq\mu$ by taking 
\bean
K=\|\lambda_x\|_\infty\max_{\theta\in\S^1}\|Y_\theta\|_\infty
+\max_{\theta\in\S^1}\|h_\theta\|_\infty+\mu.
\eea
\end{Rmk}

\begin{Cor}\label{thm:novir}
Let $(M,\om)$ be a symplectically hyperbolic manifold 
and $(M_{\varphi},\om_{\varphi})$ be the mapping torus 
with the induced Hamiltonian structure.
If $\varphi$ is not Hamiltonian,
then $(M_\varphi,\om_{\varphi})$ admits a stable Hamiltonian structure 
but no virtually contact structure.
\end{Cor}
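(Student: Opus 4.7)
The plan is to prove the two assertions separately, with the stability statement being essentially immediate and the non-existence of a virtually contact structure coming from the cofilling lower bound in Proposition~\ref{prop:lin}.

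For the stable Hamiltonian structure, I would take as stabilizing 1-form the pullback $\lambda = \pi_{\theta,\varphi}^*d\theta$ from the base of the fibration $M_\varphi \to \S^1$. Since $d\lambda = 0$, the condition $\ker \om_\varphi \subset \ker d\lambda$ of Definition~\ref{def:stb} is automatic. The remaining condition $\lambda \wedge \om_\varphi^n > 0$ is a routine pointwise check: by Remark~\ref{rmk:ker} the kernel $\ker \om_\varphi$ is spanned by $\p/\p\theta$, we have $\lambda(\p/\p\theta) = 1$, and $\om_\varphi^n = \pi_\varphi^*\om^n$ restricts to a volume form on each fiber, so the product is a volume form on $M_\varphi$.

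For the absence of a virtually contact structure, I would argue by contradiction, exploiting the fact that a virtually contact structure supplies a bounded primitive on some cover, which forces the cofilling function to have bounded growth. Since $\varphi$ is not Hamiltonian, Theorem~\ref{thm:flux} together with $\Gamma_\om = 0$ from Proposition~\ref{prop:fluxgp} gives $\mathit{Flux}(\varphi) \neq 0$ in $H^1(M,\R)$. Proposition~\ref{prop:lin} then yields $u_{f^*\om_\varphi}(s) \gtrsim s$ on $M_{\one}$, and since $f: M_{\one} \to M_\varphi$ is a diffeomorphism and the growth type of the cofilling function is a smooth invariant (any two Riemannian metrics on a closed manifold are bi-Lipschitz equivalent, which is the natural extension of Proposition~\ref{prop:coh}), this transfers to $u_{\om_\varphi}(s) \gtrsim s$ on $M_\varphi$.

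Now assume for contradiction that $(M_\varphi, \om_\varphi)$ carries a virtually contact structure with cover $p: \widehat \Sigma \to M_\varphi$ and bounded primitive $\eta$ of $p^*\om_\varphi$. The universal cover $\widetilde{M_\varphi}$ factors through $\widehat \Sigma$ via a covering map $q: \widetilde{M_\varphi} \to \widehat \Sigma$, which is a local isometry with respect to the lifted metrics, so $q^*\eta$ is a bounded primitive of the lift of $\om_\varphi$ on $\widetilde{M_\varphi}$. This forces $u_{\om_\varphi}(s) \sim 1$, contradicting the lower bound $u_{\om_\varphi}(s) \gtrsim s$. The main point requiring care is the transfer of the cofilling estimate from $M_{\one}$ to $M_\varphi$ along $f$, but no new ideas are needed beyond what is already in the preliminaries.
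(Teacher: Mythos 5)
Your proof is correct and follows essentially the same route as the paper: the stabilizing form $\pi_{\theta}^*d\theta$ for the stable Hamiltonian structure, and Proposition~\ref{prop:lin} (via the non-vanishing flux from Theorem~\ref{thm:flux} and Proposition~\ref{prop:fluxgp}) to rule out any virtually contact structure. You spell out two small points the paper leaves implicit---transferring the cofilling lower bound along the diffeomorphism $f$ from $M_{\one}$ to $M_\varphi$, and lifting a bounded primitive on an arbitrary cover up to the universal cover---but these are just added detail, not a different argument.
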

\begin{proof}
Let $f:M_\one\to M_\varphi$ be the twisting map defined in (\ref{eqn:f}).
As mentioned in Remark \ref{rmk:ker} $\ker(f^*\om_{\varphi})$ 
is spanned by the vector field $\frac{\p}{\p\theta}-Y_\theta$.
In order to define a stable structure on $(M_\varphi,\om_{\varphi})\cong(M\times \S^1,f^*\om_{\varphi})$,
we choose the stabilizing 1-form $\lambda$ in Definition \ref{def:stb} as $\pi_{\theta}^*d\theta$.
Since $\lambda=\pi_{\theta}^*d\theta$ is closed, the first condition in (\ref{eqn:stb}) holds trivially.
Using (\ref{eqn:omvarphi}) one verifies that 
$\lambda\wedge (f^*\om_{\varphi})^n
=\pi_{\theta}^*d\theta\wedge\pi^*\om^n$, $2n=\dim M$, 
and this form vanishes nowhere,
which implies that the second condition in (\ref{eqn:stb}) also holds true.
Thus $(M_\varphi,\om_{\varphi})$ 
admits a stable Hamiltonian structure with a stabilizing 1-form $\pi_{\theta}^*d\theta$.
By Proposition \ref{prop:lin},
the Hamiltonian structure 
$\om_\varphi$ has a cofilling function of at least linear type.
This means that there is no bounded primitive of $\om_\varphi$
even in the universal cover 
and hence $(M_\varphi,\om_\varphi)$ cannot be
a virtually contact structure.
\end{proof}

\subsection{Metrics on Hamiltonian structures}

In order to obtain primitives of the Hamiltonian structures
$(M_\one,\om_\one)$, $(M_\varphi,\om_\varphi)$,
we now consider the lifted structures 
on the universal covers.
The lifted Hamiltonian structure 
$(\widetilde M_\one,\widetilde\om_\one)$
is clearly isomorphic to 
$(\widetilde M\times\R,\widetilde\pi^*\widetilde\om)$,
where $\widetilde\pi:\widetilde M\times\R\to\widetilde M
:(\widetilde m,r)\mapsto \widetilde m$. 
Note that $\widetilde M_\varphi=\widetilde M\times\R$ 
and $\widetilde\om_\varphi=\widetilde\pi_\varphi^*\widetilde\om$,
where $\widetilde\pi_\varphi:\widetilde M_\varphi\to\widetilde M$
is the lift of $\pi_\varphi:M_\varphi\to M$.
Since $\widetilde\pi=\widetilde\pi_\varphi$,
we have $(\widetilde M_\varphi,\widetilde\om_\varphi)
\cong(\widetilde M\times\R,\widetilde\pi^*\widetilde\om)$.

Even though both lifted structures 
$(\widetilde M_\one,\widetilde\om_\one)$, 
$(\widetilde M_\varphi,\widetilde\om_\varphi)$
are isomorphic to 
$(\widetilde M\times\R,\widetilde\pi^*\widetilde\om)$,
they have different deck transformations as follows:
An element $n\in\Z\cong\pi_1(\S^1)\hookrightarrow\pi_1(M_\one)$
induces a translation 
$(\widetilde m,r)\mapsto(\widetilde m,r+n)$
on the universal cover $\widetilde M_\one$,
while $n\in\Z\cong\pi_1(\S^1)\hookrightarrow\pi_1(M_\varphi)$ act by
$(\widetilde m,r)\mapsto(\widetilde\varphi^n(\widetilde m),r+n)$
on $\widetilde M_\varphi$.
Here 
$\widetilde\varphi^n$ is the $n$-th 
iterate of $\widetilde\varphi$.

We next consider the lift 
$\widetilde f:\widetilde M_\one\to\widetilde M_\varphi$
of $f:M_\one\to M_\varphi$.
Since $\widetilde M_\one=\widetilde M\times\R
=\widetilde M_\varphi$, it suffices to define
\bean
\widetilde f:\widetilde M\times\R&\to\widetilde M\times\R\\
(\widetilde m,r)&\mapsto(\widetilde\varphi_r(m),r).
\eea
Here $\widetilde\varphi_r=\widetilde\varphi_{r-\lfloor r\rfloor}
\circ\widetilde\varphi^{\lfloor r\rfloor}$,
where $\lfloor r\rfloor$ is the largest integer 
not greater than $r$ and $\widetilde\varphi_\theta$ is the
lift of $\varphi_\theta$ for $0\leq\theta<1$.
We summarize the Hamiltonian structures, maps and their lifts
in the following diagram:
\bea\label{dia:proj}
\xymatrix{
(\widetilde M,\widetilde\om) \ar[d]^{p}
&(\widetilde M_{\one},\widetilde\om_{\one}) \ar[l]_{\widetilde\pi}
\ar[d]^{p_{\one}} \ar[r]^{\widetilde f}
&(\widetilde M_\varphi,\widetilde\om_\varphi) \ar[d]^{p_\varphi}\\
(M,\om)
&(M_{\one},\om_{\one}) \ar[l]_{\pi} \ar[r]^{f}
&(M_\varphi,\om_\varphi)
}
\eea
Here $p_{\one}$, $p_\varphi$ are the natural projections.

Now we consider Riemannian metrics on the above spaces.
Let $g$ be a Riemannian metric on $M$,
$g_{\theta}$ be the standard metric on $\S^1=\R/\Z$ 
which is induced by the Euclidean metric on $\R$.
We consider a product Riemannian metric 
$g_{\one}:=g\oplus g_{\theta}$ on $M_{\one}=M\times \S^1$ and
the lifts $\widetilde g$, $\widetilde g_{\one}$
to the corresponding universal covers 
$\widetilde M$, $\widetilde M\times\R$.

The lifted Hamiltonian structure 
$(\widetilde M_\varphi,\widetilde\om_\varphi)
\cong(\widetilde M\times\R,\widetilde\pi^*\widetilde\om)$
admits a bounded primitive 
$\widetilde\pi^*\lambda\in\Om^1(\widetilde M_\varphi)$,
i.e.
$d(\widetilde\pi^*\lambda)=\widetilde\pi^*\widetilde\om$,
with respect to $\widetilde g_\one$.
Here the primitive 1-form $\lambda\in\Om^1(\widetilde M)$
exists and is bounded with respect to $\widetilde g$,
since our manifold $(M,\om)$ is symplectically hyperbolic.
This immediately implies that 
the pull-back $\widetilde f^*\widetilde\om_\varphi$
also has a bounded primitive with respect to
the pull-back metric $\widetilde f^*\widetilde g_\one$.
By Proposition \ref{prop:lin}, however,
$\widetilde f^*\widetilde\om_\varphi$ never 
admits a bounded primitive with respect to the
metric $\widetilde g_\one$, 
when $\varphi$ is a non-Hamiltonian symplectomorphism.
Note in particular that $\widetilde f^*\widetilde g_\one$
cannot be expressed as a lift of a Riemannian metric
on $M_\one$.

We now investigate the pull-back metric
$\widetilde f^*\widetilde g_\one$ on $\widetilde M_\one$.
For $n\in\Z\subset\R$, 
$(\widetilde m,n)\in\widetilde M_\one$
and $(\widetilde m_i,0)\in T_{(\widetilde m,n)}\widetilde M_\one$,
we have
\bea\label{eqn:fmetpro}
(\widetilde f^*\widetilde g_\one)_{(\widetilde m,n)}
\big((\widetilde{ m}_1,0),(\widetilde{ m}_2,0)\big)
=&(\widetilde g_\one)_{\widetilde f(\widetilde m,n)}
\big(\widetilde f_*(\widetilde{ m}_1,0),
\widetilde f_*(\widetilde{ m}_2,0)\big)\\
=&(\widetilde g_\one)_{\widetilde f(\widetilde m,n)}
\big((d\widetilde\varphi^{n}(\widetilde m)[\widetilde{ m}_1],0),
(d\widetilde\varphi^{n}(\widetilde m)[\widetilde{ m}_2],0)\big)\\
=&\widetilde g_{\widetilde\varphi^{n}(\widetilde m)}
\big(d\widetilde\varphi^{n}(\widetilde m)[\widetilde{ m}_1],
d\widetilde\varphi^{n}(\widetilde m)[\widetilde{ m}_2]\big)\\
=&g_{\varphi^n(m)}
\big(d\varphi^n(m)[m_1],
d\varphi^n(m)[m_2]\big),
\eea
where $m=p(\widetilde m)$ and $m_i=p_*(\widetilde m_i)$.

\section{Proof of Main Theorem}

When $\varphi$ is Hamiltonian diffeomorphism on $(M,\om)$, 
as mentioned in the introduction,
Polterovich's result implies $\Gamma_n(\varphi)\gtrsim n$.
So we only need to consider
a symplectomorphism $\varphi\in\mathit{Symp}_0(M,\om)$
with a non-vanishing flux.

A crucial observation in this article is the following.
We can choose a primitive 
$\widetilde f^*\widetilde\pi^*\lambda
\in\Om^1(\widetilde M_\one)$ 
of $\widetilde f^*\widetilde\om_\varphi$ 
which is bounded with respect to the twisted metric 
$\widetilde f^*\widetilde g_\one$.
But $\widetilde f^*\widetilde\pi^*\lambda$
has at least linear growth with respect to
the standard metric $\widetilde g_{\one}$
by Proposition \ref{prop:lin}.
Now we interpret the difference between 
$\widetilde g_{\one}$ and $\widetilde f^*\widetilde g_\one$
as a lower bound for the growth rate of $\Gamma_n(\varphi)$.

Let us fix a primitive 
\bea\label{eqn:prim}
\widetilde f^*\widetilde\pi^*\lambda
=\widetilde\pi^*\lambda+r\cdot\widetilde\pi^*
(\iota_{\widetilde Y_{r}}\widetilde\om)
\eea
of $\widetilde f^*\widetilde \om_\varphi$ 
in Lemma \ref{lem:twham}.
Here $r$ is the coordinate for $\R=\widetilde\S^1$
and $\widetilde Y_r$ is the lift of $Y_\theta$ to $\widetilde M_\one$.
Without loss of generality, we may assume that
$[\iota_{Y_t}\om]\in H^1(M,\R)$ is non-trivial for $t=0$
which implies that 
$\max_{z\in\widetilde M}
|(\iota_{\widetilde Y_n}\widetilde\om)|_{\widetilde g}$
is positive for all $n\in\N\subset\R$.
Now we pick a point $m\in M$ such that 
\bean
|(\iota_{Y_0}\om)_m|_g
=|(\iota_{\widetilde Y_n}\widetilde\om)_{\widetilde m}|_{\widetilde g}
>0, \quad \forall n\in\N\subset\R.
\eea
Since $|\widetilde\pi^*\lambda|_{\widetilde g_\one}$ is bounded,
\bean
|(\widetilde f^*\widetilde\pi^*\lambda)_{(\widetilde m,n)}|
_{\widetilde{g}_\one}
\sim
n\cdot |(\iota_{\widetilde Y_n}\widetilde\om)_{\widetilde m}|
_{\widetilde g}
\sim n,
\quad \forall n\in\N\subset\R.
\eea
By definition of the norm, there is a sequence of tangent vectors 
$\{X_n\}_{n\in\N}$, 
$X_n\in T_{(\widetilde m,n)}\widetilde M_{\one}$ 
which meets the following conditions:
\begin{itemize}
\item
$\|X_n\|_{\widetilde g_{\one}}
=\|\widetilde\pi_*X_n\|_{\widetilde g}
=\|p_*\widetilde\pi_*X_n\|_{g}=1,\quad \forall n\in\N;$
\item
$(\widetilde f^*\widetilde\pi^*\lambda)
_{(\widetilde m,n)}(X_n)\sim n,$
\end{itemize}
where $\widetilde\pi:\widetilde M_\one\to\widetilde M$, 
$p:\widetilde M\to M$ are as in $(\ref{dia:proj})$.
We can assume that $X_n$ has no $r$-component, 
because $\widetilde f^*\widetilde\pi^*\lambda$ in (\ref{eqn:prim})
has no $dr$-part.

Now we change the metric $\widetilde g_\one$ to 
$\widetilde f^*\widetilde g_\one$
on $\widetilde M_{\one}$
\bean
n
\sim &(\widetilde f^*\widetilde\pi^*\lambda)_{(\widetilde m,n)}(X_n)\\
\leq &\sup_{z\in\widetilde M_\one}
|(\widetilde f^*\widetilde\pi^*\lambda)_z|
_{\widetilde f^*\widetilde g_\one}
\cdot\|X_n\|_{\widetilde f^*\widetilde g_\one} \\
= &\sup_{z\in\widetilde M_\varphi}
|(\widetilde\pi^*\lambda)_{z}|
_{\widetilde g_\one}
\cdot\|X_n\|_{\widetilde f^*\widetilde g_\one} \\
\leq & C\cdot\|X_n\|_{\widetilde f^*\widetilde g_\one},
\eea
where the constant $C>0$ comes from the fact that 
$\widetilde\pi^*\lambda$ is bounded with respect to
the metric $\widetilde g_\one$.
Since the tangent vector $X_n$ has no $r$-direction,
we can use (\ref{eqn:fmetpro}) to obtain
\bean
n^2
\lesssim&\|X_n\|^2_{\widetilde f^*\widetilde g_\one}\\
=& (\widetilde f^*\widetilde g_\one)_{(\widetilde m,n)}(X_n,X_n)\\
=& g_{\varphi^n(m)}
\big(d\varphi^n(m)[p_*\widetilde\pi_*X_n],
d\varphi^n(m)[p_*\widetilde\pi_*X_n]\big)\\
\leq& \max_{m\in M}|d\varphi^n(m)|_{g}^2
\cdot g(p_*\widetilde\pi_*X_n,p_*\widetilde\pi_* X_n)\\
=& \max_{m\in M}|d\varphi^n(m)|_{g}^2.
\eea
Hence we conclude that $\max_{m\in M}|d\varphi^n(m)|_{g}$
has at least linear growth.

\end{document}